\theoremstyle{plain}
\newtheorem{theorem}{Theorem}[section]	
\newtheorem{corollary}[theorem]{Corollary}
\theoremstyle{definition}
\newtheorem{definition}[theorem]{Definition}
\newtheorem{example}[theorem]{Example}
\theoremstyle{remark}
\newtheorem{remark}[theorem]{Remark}
\numberwithin{equation}{section}	
\def\R{\mathbb{R}}    
\def\N{\mathbb{N}}  
\def\Rd{\mathbb{R}^{d}}
\def\Rr{\mathbb{R}^{r}}
\def\Mat#1{{{\rm M}_{#1}(\C)}}
\def\mnu{{\mib \nu}}
\def\mx{\mathbf{x}}
\def\vf{\mathsf{f}}
\def\vu{\mathsf{u}}
\def\vv{\mathsf{v}}
\def\mA{\mathbf{A}}
\def\mC{\mathbf{C}}
\def\lD{\mathcal{D}}
\def\lH{\mathcal{H}}
\def\lL{\mathcal{L}}
\def\lV{\mathcal{V}}
\def\lW{\mathcal{W}}
\def\dom{\operatorname{dom}}
\def\ker{\operatorname{ker}}
\def\scp#1#2{\langle\, #1 \mid #2 \,\rangle}  
\def\scpp#1#2{\Bigl\langle\, #1 \Bigm| #2 \,\Bigr\rangle}  
\def\iscp#1#2{[\, #1 \mid #2 \,]}  
\def\re{\operatorname{Re}\,}	
\def\im{\operatorname{Im}\,}	
\def\mA{{\bold A}}
\def\mC{{\bold C}}
\def\mx{{\bold x}}
\def\mnu{{\boldsymbol \nu}}
\def\vu{{\sf u}}
\def\vf{{\sf f}}
\def\vv{{\sf v}}
\def\Lb#1{{{\mathrm L}^\infty(#1)}}
\def\Mat#1#2{{{\mathrm M}_{#1}(#2)}}
\def\Op{{\lL}}		
\def\rest#1{_{\displaystyle |_{#1}}}
\def\Cbc#1{{{\mathrm C}^{\infty}_{c}(#1)}}
\def\str{\longrightarrow}
\def\dscon{\relbar\joinrel\rightharpoonup}
\def\Skp#1#2{\langle\, #1 \mid #2 \,\rangle}
\def\Nor#1{\| #1 \|}    
\def\Ld#1{{{\mathrm L}^{2}(#1)}}
\def\fab{{{\mathcal F}(\alpha,\beta;L_0+\widetilde{L}_0, \lH)}}
\def\fabs{{{\mathcal F}_{\alpha,\beta}}}
\def\oi#1#2{\langle#1,#2\rangle}        
\begin{document}

\title[G-convergence of Friedrichs systems revisited]{G-convergence of Friedrichs systems revisited}

\author{K.~Burazin}\address{Kre\v{s}imir Burazin,
	School of Applied Mathematics and Computer Science, J. J. Strossmayer University of Osijek, Trg Ljudevita Gaja 6, 
	31000 Osijek, Croatia
	}	\email{kburazin@mathos.hr}

\author{M.~Erceg}\address{Marko Erceg,
	Department of Mathematics, Faculty of Science, University of Zagreb, Bijeni\v{c}ka cesta 30,
	10000 Zagreb, Croatia
	}\email{maerceg@math.hr}

\author{M.~Waurick}\address{Marcus Waurick,
	Institute for Applied Analysis,
	Faculty of Mathematics and Computer Science,
	TU Bergakademie Freiberg, Pr\"uferstra\ss e 9, 09599 
	Freiberg, Germany
	}\email{marcus.waurick@math.tu-freiberg.de}




\begin{abstract}
We revisit homogenisation theory for Friedrichs systems. In particular, we show that $G$-compactness can be obtained under severely weaker assumptions than in the original work of Burazin and Vrdoljak (2014). In this way we extend the applicability of $G$-compactness results for Friedrichs systems to equations that yield memory effects in the homogenised limit and detour any usage of  compactness techniques  previously employed.
\end{abstract}

\maketitle

\textbf{MSC2020}: 35B27, 35F45, 35M32, 47F05

\textbf{Keywords}: $G$-convergence, $G$-compactness, Friedrichs systems, Memory effects, Homogenisation

\section{Introduction}\label{sec:intro}

The theory of homogenisation is concerned with the macroscopic description of physical phenomena for materials with heterogeneities on the micro- or mesoscale, see, e.g., \cite{Bensoussan1978,Cioranescu1999,Tartar2009} as general references. Usually this is modelled by coefficient functions $a$ given as functions on $\R^d$ for $d$ suitable and the consideration of $a_n(x)\coloneqq a(nx)$ for $n\in \N$. Then, one associates a sequence $(u_n)_n$ defined as the solutions of a sequence of (partial differential) equations involving $(a_n)_n$ as coefficients. The focus is on the limit $n\to\infty$ and, whether a possibly existent (weak) limit $u$ of the sequence $(u_n)_n$ satisfies an equation similar to $u_n$ with $a_n$ replaced by \emph{some} $a_{\text{hom}}$, \emph{the effective} or \emph{homogenised} coefficient. Whether or not such an $a_{\text{hom}}$ exists is a priori unclear. Thus, in order to identify $a_{\text{hom}}$ one shows that at least for a subsequence of $(u_n)_n$, it is possible to show mere existence of \emph{any} $a_{\text{hom}}$, the closer understanding (e.g., unicity etc.) of which then depends on the homogenisation problem at hand. This note is concerned with the existence proof for such a coefficient for a particular class of (abstract partial) differential equations, the class of \emph{Friedrichs systems}. Prior to elaborating on this new development, let us first mention that the notion of positive symmetric systems or Friedrichs systems dates from Kurt Otto Friedrichs \cite{KOF}, who showed that this class of problems encompasses a variety of initial and boundary value problems for various linear partial differential equations of different types. The rebirth of this idea was motivated by a new interest from numerical analysis of differential equations \cite{EGsemel, HMSW, MJensen} and begins with the development  of the theory of abstract Friedrichs systems in Hilbert spaces \cite{EGbook, EGC, ABcpde}. The theory has been extensively studied during the last two decades both from the theoretical and the numerical point of view, see \cite{ABCE, AEM-2017, TBT, BEmjom, BEF, ES22, MDS} and references therein.

Here we shall revisit and generalise a so-called $G$-compactness statement for Friedrichs systems from \cite{BVcpaa}. The generalisation consists of two components: (1) the results will be applicable to abstract Friedrichs systems having the variable coefficient case as a particular special case and --- more importantly --- (2) we will render an assumption made in \cite{BVcpaa} as entirely obsolete. The reason for (2) to be possible is that we realised that some of the equalities in the respective proofs of \cite{BVcpaa} may be replaced by inequalities and that, thus, only lower/upper semi-continuity needs to be used instead of proper continuity statements. The semi-continuity of the decisive expression is then however identified to be a consequence of the Friedrichs systems structure that is underneath anyway. Thus, in passing, we add to the general theory of Friedrichs systems and provide a semi-continuity result of  independent interest (see Theorem \ref{tm:iskp-cont}).

As a consequence of the improvements just sketched, the $G$-compactness result for Friedrichs systems here applies to a wider variety of equations with less properties to control as compared to the result in \cite{BVcpaa}. In particular, some previously used compactness results for the application of results in \cite{BVcpaa} can be detoured, which, for example, enables us to justify the rationale in \cite{Tbis} to find the homogenised equation (with memory effects) in an infinite-dimensional ordinary differential equation. Moreover, in relation to examples this means that for instance Robin boundary conditions are admitted for all domains (particularly unbounded) that allow for a proper formulation in the framework of Friedrichs operators.

In order to frame the results obtained here in a more general context, we briefly present some of the history of $G$-convergence and -compactness as well as related results next. Initially invented for wave and heat type equations, Spagnolo \cite{Spagnolo1967,Spagnolo1976} introduced $G$-convergence as an abstract means to understand micro-oscillations of the (self-adjoint) conductivity matrices without having to resort to periodicity properties of the coefficients, see, e.g., \cite[Chapter 13]{Cioranescu1999} for a list of results. In parallel to Spagnolo, Tartar and Murat \cite{Murat1997, Tartar1975} introduced the concept of $H$-convergence to include the non-self-adjoint case. Quite naturally, they focused on time-independent divergence form problems. Similar concepts have been applied to other equations like stationary elasticity \cite{FM, Tartar1986}, elastic plates \cite{BJV}, and the like. A nonlocal generalisation of both $G$- and $H$-convergence can be found in \cite{W18_NHC,W22_NHC}. 

From an operator theoretic point of view dealing with operator coefficients rather than multiplication operators in the context of so-called evolutionary equations, theorems concerning $G$-compactness results have been developed in \cite{WaurickPhD,W16_H,W16_Gcon}. Most prominently, the results in \cite{W12_HO,Waurick2014} deal with abstract ordinary differential equations in infinite-dimensional Hilbert spaces and the results in \cite{W13_HP,W14_FE,W16_HPDE} provide a corresponding partial differential equations perspective using the full framework of evolutionary equations introduced by Picard \cite{PicPhy}, see also \cite{STW22} for an accessible monograph introducing the necessary results in considerable detail. The reader is particularly referred to \cite[Chapters 13 and 14]{STW22}. In a more particular setting appropriately generalising stochastic two-scale convergence in the mean, we refer to \cite{NVW}, where the focus is slightly shifted and more assumptions on the oscillatory part are imposed.

In relation to the mentioned work, the present results for Friedrichs systems can be rephrased as $G$-compactness results for the perturbations of $m$-accretive operators. Indeed, in view of the work in \cite{WW18,WW20,PT23}, abstract Friedrichs systems can also be viewed as the theory of finding $m$-accretive extensions of certain potentially unbounded operators. In a nutshell, the main result of the present manuscript confirms that given an $m$-accretive operator $A$, which itself is an extension of a skew-symmetric operator,  and a bounded sequence $(C_n)_n$ of strictly $m$-accretive bounded linear operators, there exists a subsequence and a bounded linear operator $C$ satisfying similar bounds as the $C_n$ do, such that $(C_{n_k}+A)^{-1} \to (C+A)^{-1}$ in the weak operator topology; see Corollary \ref{cor:sksa} below. This is remarkable in as much as there is neither a compactness condition nor a spectral assumption like (skew-)selfadjointness for (the resolvent of) $A$ needed; this contrasts and complements previously known results (see, e.g., \cite[Lemma 4.5]{EGW17_D2N} or \cite[Lemma 14.1.2]{STW22}). Furthermore, note that the perspective presented here revisits the standpoint from \cite{BVcpaa} of looking at some operator topology convergence of $(C_{n_k}+A)^{-1}$ rather than at the convergence of $C_{n_k}$, which is the common starting point of the results for evolutionary equations mentioned above.

Next, we summarise the course of the present manuscript. In Section \ref{sec:abstractFO} we recall the concept of abstract Friedrichs operators and the corresponding concept of $G$-convergence. This section also contains our main result, Theorem \ref{thm:Gconv}, and the condition (K1) from the previously known $G$-compactness statement we were able to entirely dispense with. The proper replacement for (K1) and the corresponding proof that said replacement is always satisfied in the general situation of abstract Friedrichs operators is presented in Section \ref{sec:inequalitybdyop}. Section \ref{sec:proof-Gconv} is devoted to the proof of our main theorem, which in passing also provides the full argument for the complex Hilbert space case, which was omitted for simplicity in \cite{BVcpaa}. In the final Section \ref{sec:classFOs} we apply our abstract findings to classical Friedrichs operators and provide some examples.

\section{G-convergence and abstract Friedrichs operators}\label{sec:abstractFO}

\subsection{Abstract Friedrichs operators}

The abstract Hilbert space formalism for Friedrichs systems which we 
study in this paper was introduced 
and developed in \cite{EGC, ABcpde} for real vector spaces, while 
the required differences for complex vector spaces have been 
supplemented more recently in \cite{ABCE}. 
Here we present the definition in the form given in
\cite[Definition 1]{AEM-2017}.

\begin{definition}\label{def:abstractFO}
A (densely defined) linear operator $T_0$ on a complex Hilbert space $\lH$
(a scalar product is denoted by
$\scp\cdot\cdot$, which we take to be anti-linear in the second entry)
is called an \emph{abstract Friedrichs operator} if there exists a
(densely defined) linear operator $\widetilde{T}_0$ on $\lH$ with the following properties:
\begin{itemize}
 \item[(T1)] $T_0$ and $\widetilde{T}_0$ have a common domain $\lD$, i.e.~$\dom T_0=\dom\widetilde{T}_0=\lD$,
 which is dense in $\lH$, satisfying
 \[
 \scp{T_0\phi}\psi \;=\; \scp\phi{\widetilde T_0\psi} \;, \qquad \phi,\psi\in\mathcal{D} \,;
 \]
 \item[(T2)] there is a constant $\lambda>0$ for which
 \[
 \|(T_0+\widetilde{T}_0)\phi\| \;\leqslant\; 2\lambda\|\phi\| \;, \qquad \phi\in\mathcal{D} \,;
 \]
 \item[(T3)] there exists a constant $\mu>0$ such that
 \[
 \scp{(T_0+\widetilde{T}_0)\phi}\phi \;\geqslant\; 2\mu \|\phi\|^2 \;, \qquad \phi\in\mathcal{D} \,.
 \]
\end{itemize}
The pair $(T_0,\widetilde{T}_0)$ is referred to as a \emph{joint pair of abstract Friedrichs operators}
(the definition is indeed symmetric in $T_0$ and $\widetilde{T}_0$).
\end{definition}

Before moving to the main topic of the paper, 
let us briefly recall the essential properties 
of (joint pairs of) abstract Friedrichs operators, 
which we summarise in the form of a theorem.
At the same time, we introduce the notation that is used throughout the paper.
The presentation is made in two steps: first we deal with the consequences 
of conditions (T1)--(T2), and then we emphasise the additional structure implied by condition (T3). 

\begin{theorem}\label{thm:abstractFO-prop}
Let a pair of linear operators $(T_0,\widetilde{T}_0)$ on $\lH$ satisfy {\rm (T1)} and {\rm (T2)}. Then the following holds.
\begin{enumerate}
\item[i)] $T_0\subseteq \widetilde{T}_0^*=:T_1$ and $\widetilde{T}_0\subseteq T_0^*=:\widetilde{T}_1$, where 
$\widetilde{T}_0^*$ and $T_0^*$ are adjoints of $\widetilde{T}_0$ and $T_0$, respectively.
\item[ii)] The pair of closures $(\overline{T}_0,\overline{\widetilde{T}}_0)$ satisfies {\rm (T1)--(T2)} with the same constant $\lambda$.
\item[iii)] $\dom \overline{T}_0=\dom\overline{\widetilde{T}}_0=:\lW_0$ and $\dom T_1=\dom\widetilde{T}_1=:\lW$.
\item[iv)] The graph norms $\|\cdot\|_{T_1}:=\|\cdot\|+\|T_1\cdot\|$ and $\|\cdot\|_{\widetilde T_1}
	:=\|\cdot\|+\|\widetilde T_1\cdot\|$ are equivalent, $(\lW,\|\,\cdot\,\|_{T_1})$ is a Hilbert space (the \emph{graph space})
	and $\lW_0$ is a closed subspace containing $\lD$.
\item[v)] The linear operator $\overline{T_0+\widetilde{T}_0}$ is everywhere defined, bounded and self-adjoint on 
$\lH$ such that on $\lW$ it coincides with $T_1+\widetilde{T}_1$.
\item[vi)] The sesquilinear map 
\begin{equation}\label{eq:D}
\iscp uv \;:=\; \scp{T_1u}{v} 
- \scp{u}{\widetilde{T}_1v} \;,
\quad u,v\in\lW \,, 
\end{equation}
is an indefinite inner product on $\lW$ and we have $\lW^{[\perp]}=\lW_0$ and $\lW_0^{[\perp]}=\lW$, where
the $\iscp\cdot\cdot$-orthogonal complement of a set $S\subseteq \lW$
is defined by
\begin{equation*} 
S^{[\perp]} := \bigl\{u\in \lW : (\forall v\in S) \quad 
\iscp uv = 0\bigr\}
\end{equation*}
and it is closed in $\lW$.
\end{enumerate}

Assume, in addition, {\rm (T3)}, i.e.~$(T_0,\widetilde{T}_0)$ is 
a joint pair of abstract Friedrichs operators. Then
\begin{itemize}
\item[vii)] $(\overline{T}_0,\overline{\widetilde{T}}_0)$ satisfies {\rm (T3)} with the same constant $\mu$.
\item[viii)] $\overline{T_0+\widetilde{T}_0}$ is coercive.
\item[ix)] We have
\begin{equation}\label{eq:decomposition}
\lW \;=\; \lW_0 \dotplus \ker T_1 \dotplus \ker\widetilde T_1 \;,
\end{equation}
where the sums are direct and all spaces on the right-hand side 
are pairwise $\iscp{\cdot}{\cdot}$-orthogonal. 
Moreover, the (non-orthogonal) linear projections
\begin{equation}\label{eq:projections}
p_\mathrm{k} : \lW \to \ker T_1 \quad \hbox{and} \quad
p_\mathrm{\tilde k}:\lW\to \ker \widetilde{T}_1
\end{equation}
are continuous as maps $(\lW,\|\cdot\|_{T_1})\to (\lH,\|\cdot\|)$, i.e.~$p_\mathrm{k}, p_\mathrm{\tilde k}\in\lL(\lW,\lH)$.
\item[x)] If $\lV$ is a subspace of the graph space $\lW$
satisfying condition {\rm (V)}: $\lV=\widetilde{\lV}^{[\perp]}$, where $\widetilde{\lV}:=\lV^{[\perp]}$,
and
\begin{equation}
\begin{aligned}
&(\forall u\in \lV) \qquad \iscp uu \;\geq\; 0 \,, \\
&(\forall v\in \widetilde\lV) \qquad \iscp{v}{v} \;\leq\; 0\,,
\end{aligned}
\tag{V1}
\end{equation}
then $T_1|_\lV:\lV\to\lH$ and $\widetilde{T}_1|_{\widetilde{\lV}}:\widetilde{\lV}\to\lH$ are bijective,
i.e.~isomorphisms when we equip their domains with the 
graph topology, and for every $u\in\lV$ the following estimate holds:
\begin{equation}\label{eq:apriori}
	\|u\|_{T_1} \leq \Bigl(1+\frac{1}{\mu}\Bigr) \|T_1 u\| \,.
\end{equation}
The same estimate holds for $\widetilde{T}_1$ and $\widetilde{\lV}$ replacing $T_1$ and $\lV$, respectively.
\item[xi)] Let $\lV$ be a closed subspace of $\lW$ such that $\lW_0\subseteq\lV$.
Then $T_1|_\lV:\lV\to\lH$ is bijective if and only if $\lV\dotplus\ker T_1 = \lW$.
\end{itemize}
\end{theorem}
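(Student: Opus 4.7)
The plan is to split the iff into its two implications, with the converse holding the bulk of the argument.

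\textbf{Forward direction.} Assuming $T_1|_\lV$ is bijective, I would deduce directness of the sum from injectivity, $\lV \cap \ker T_1 = \ker(T_1|_\lV) = \{0\}$, and then use surjectivity to realise any $w \in \lW$ as $v + (w-v)$ with $v \in \lV$ chosen so that $T_1 v = T_1 w$; since $w - v \in \ker T_1$, this exhibits $w \in \lV \dotplus \ker T_1$.

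\textbf{Converse.} Starting from $\lV \dotplus \ker T_1 = \lW$, injectivity of $T_1|_\lV$ is immediate from $\lV \cap \ker T_1 = \{0\}$. For surjectivity I would first reduce: since $T_1(\lV) = T_1(\lV + \ker T_1) = T_1(\lW)$, it suffices to show $T_1 : \lW \to \lH$ itself is surjective. Granted this, any $f \in \lH$ equals $T_1 w$ for some $w = v + k \in \lV \dotplus \ker T_1$, and then $T_1 v = f$.

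\textbf{Surjectivity of $T_1 : \lW \to \lH$.} This will be the main obstacle. It is not immediately supplied by (x), whose bijectivity assertion rests on the cone conditions (V)--(V1) that are not assumed here. My plan is to derive it directly from (ix) and the structural facts (v)--(viii). From (T1) and (T3) one gets $\scp{(T_0+\widetilde T_0)\phi}{\phi} = 2\re\scp{T_0\phi}{\phi} \geq 2\mu\|\phi\|^2$, whence $\|\overline T_0 u\| \geq \mu\|u\|$ on $\lW_0$; this makes $T_1(\lW_0) = \ran\overline T_0$ closed, and since $T_0^* = \widetilde T_1$, it coincides with $(\ker\widetilde T_1)^\perp$. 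On $\ker\widetilde T_1$ (which is closed in $\lH$ by closedness of $\widetilde T_1$), the identity $T_1 w = \overline{T_0+\widetilde T_0}\,w$ together with (T2)/(v) bounds $\|T_1 w\| \leq 2\lambda\|w\|$, while (viii) yields $\scp{T_1 w}{w} \geq 2\mu\|w\|^2$. Hence the sesquilinear form $b(u,w) := \scp{T_1 u}{w}$ on $\ker\widetilde T_1 \times \ker\widetilde T_1$ is bounded and coercive, and the Lax--Milgram lemma produces, for every $g \in \ker\widetilde T_1$, some $u \in \ker\widetilde T_1$ with $P\,T_1 u = g$, where $P$ denotes the orthogonal projection of $\lH$ onto $\ker\widetilde T_1$. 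To finish, I would write an arbitrary $f \in \lH$ as $f_\perp + f_k$ with $f_\perp \in (\ker\widetilde T_1)^\perp$ and $f_k \in \ker\widetilde T_1$, choose $u \in \ker\widetilde T_1$ with $P\,T_1 u = f_k$, observe $f - T_1 u \in (\ker\widetilde T_1)^\perp = \ran\overline T_0$, pick $a \in \lW_0$ with $\overline T_0 a = f - T_1 u$, and conclude $T_1(a+u) = f$, closing the proof.
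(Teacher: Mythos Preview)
Your argument for part xi) is correct. The forward direction is the obvious one, and your reduction of the converse to the surjectivity of $T_1$ on all of $\lW$ is clean. The Lax--Milgram step on $\ker\widetilde{T}_1$ works because there $T_1$ coincides with the bounded coercive self-adjoint operator $\overline{T_0+\widetilde{T}_0}$, and your final patching with $a\in\lW_0$ correctly uses $\overline{T}_0\subseteq T_1$.

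The paper itself does not prove xi); it simply refers to \cite[Lemma~3.10]{ES22}. So there is no in-paper argument to compare against. That said, your surjectivity proof, while correct, is more elaborate than necessary. A shorter route: the same coercivity estimate you derived for $\overline{T}_0$ applies symmetrically to $\overline{\widetilde{T}}_0$, so $\overline{\widetilde{T}}_0$ is injective with closed range. Since $T_1=\widetilde{T}_0^*$ is closed with $T_1^*=\overline{\widetilde{T}}_0$, the closed range theorem gives that $\operatorname{ran} T_1$ is closed, and then $\operatorname{ran} T_1=(\ker T_1^*)^\perp=(\ker\overline{\widetilde{T}}_0)^\perp=\{0\}^\perp=\lH$. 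This avoids the Lax--Milgram detour and the decomposition of $f$ entirely. Your approach has the mild advantage of being constructive and of exhibiting explicitly how the decomposition in ix) feeds into the surjectivity, but the closed-range argument is the more economical one.

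As a side remark: neither your proof nor the shortcut above actually uses the standing hypotheses that $\lV$ is closed and $\lW_0\subseteq\lV$; the equivalence holds for any subspace $\lV\subseteq\lW$ once $T_1\colon\lW\to\lH$ is known to be surjective. Those hypotheses are presumably stated for compatibility with the setting of part x) and Remark~\ref{rem:Vcond}.
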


The statements i)--iv), vii) and viii) follow easily from the corresponding
assumptions (cf.~\cite{AEM-2017, EGC}).
The claims v), vi) and x) are already 
argued in the first paper on abstract Friedrichs operators \cite{EGC} for real vector spaces
(see sections 2 and 3 there), while in \cite{ABCE} 
the arguments are repeated in the complex setting.
The decomposition given in ix) is derived in \cite[Theorem 3.1]{ES22},
while for additional claims on projectors we refer to the proof of Lemma 3.5 in the 
aforementioned reference. 
Finally, the part xi) is obtained in \cite[Lemma 3.10]{ES22}.

\begin{remark}\label{rem:Vcond}
Let us note that any subspace $\lV$ of $\lW$ that satisfies assumption {\rm (V)} from part  x) is, by part vi),  closed and contains $\lW_0$, and thus satisfies assumptions from xi). Then clearly we also have $\lV\dotplus\ker T_1 = \lW$. However, not all bijections described in part xi) are of the form
given in part x), i.e.~(V1) is only a sufficient condition. 
In this paper, we will focus only on those given in part x), as we shall make use of the
a priori estimate \eqref{eq:apriori}.
\end{remark}


\begin{remark}\label{rem:Wweak}
The graph space $\lW$ is indeed a Hilbert space when equipped with the graph inner product
$\scp{\cdot}{\cdot}_{T_1}:=\scp{\cdot}{\cdot}+\scp{T_1\,\cdot}{T_1\,\cdot}$.
Then the notion of weak convergence in $\lW$ can be characterised in the following way \cite[Theorem 3]{BVcpaa}:
a sequence $(u_n)$ converges weakly to $u$ in $\lW$ if and only if 
\begin{align*}
& u_n\dscon u \quad \hbox{in $\lH$}\,, \\
& T_1 u_n\dscon T_1 u \quad \hbox{in $\lH$}\,,
\end{align*}
where here, as well as in the rest of the paper, by $\dscon$ we denote the weak convergence in the corresponding space. 

It should be noted that here (for simplicity) the graph norm is not induced by the graph inner product, 
but it is equivalent with the corresponding induced norm. This is also the reason why 
the a priori estimate \eqref{eq:apriori} differs from the one given in \cite[Theorem 1]{BVcpaa}. 
\end{remark}
 
\begin{remark}\label{rem:VtildeV-existence}
In \cite[Theorem 9]{AEM-2017} it is shown that the conditions $\lV=\widetilde{\lV}^{[\perp]}$ and $\widetilde\lV=\lV^{[\perp]}$,
appearing in the part x) above, are equivalent to the fact that the restrictions 
$T_1|_\lV:\lV\to\lH$ and $\widetilde{T}_1|_{\widetilde{\lV}}:\widetilde{\lV}\to\lH$ are mutually adjoint. 
Moreover, it is known that for any joint pair of abstract Friedrichs operators there exists a 
subspace $\lV$ of $\lW$ such that the condition (V) is satisfied (see \cite[Theorem 13(i)]{AEM-2017}), 
i.e.~there exists at least one 
bijective realisation of the corresponding operators.
\end{remark}

\begin{remark}
Note that in the part ix) of the previous theorem we can also consider the graph norm 
in the codomain of projections, since the graph norm and the (standard) 
norm are equivalent on the kernels $\ker T_1$ and $\ker\widetilde{T}_1$.
\end{remark}

We shall now describe the main goal of the manuscript.
Let us assume that we are given a sequence $(T_{0,n},\widetilde{T}_{0,n})$ of joint pairs of 
abstract Friedrichs operators and let us denote by $(T_n,\widetilde{T}_n)$ a pair of bijective 
realisations given by Theorem \ref{thm:abstractFO-prop}(x) (see also Remark \ref{rem:VtildeV-existence}).
Then for any $f\in\lH$ there is a unique $u_n\in \dom T_n$ such that $T_n u_n=f$.
The question of (G-)convergence of $(T_n)_n$ arises naturally: Does
there exist a linear operator $T$ such that for all $f\in\lH$ the sequence of solutions
$(u_n)_n$ converges (in some sense) to a solution $u$ of $Tu=f$?

A first immediate answer to this question roots in the following compactness result.

\begin{theorem}[{{{see, e.g., \cite[Theorem 5.1]{W18_NHC}}}}]\label{thm:seqcomp} Let $\lH_0,\lH_1$ be Hilbert spaces, and denote by $\lL(\lH_0,\lH_1)$ the space of bounded linear operators from $\lH_0$ to $\lH_1$. Then
\[
B_{\lL(\lH_0,\lH_1)} \coloneqq \{ T\in \lL(\lH_0,\lH_1); \|T\|\leq 1\}
\]
is compact under the weak operator topology; it is metrisable and, in particular, sequentially compact, if both $\lH_0$ and $\lH_1$ are separable.
\end{theorem}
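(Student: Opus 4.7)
The plan is the standard reduction to Tychonoff's theorem via Banach--Alaoglu in the Hilbert space $\lH_1$. First, I would consider the map
\[
\iota: B_{\lL(\lH_0,\lH_1)} \longrightarrow \prod_{x\in\lH_0}\bigl\{z\in\lH_1: \|z\|\leq \|x\|\bigr\}, \qquad T \mapsto (Tx)_{x\in\lH_0},
\]
where each factor is equipped with the weak topology and is therefore weakly compact by Banach--Alaoglu. By Tychonoff's theorem the product is compact, and by the definition of the weak operator topology the map $\iota$ is a topological embedding onto its image. The core task is thus to show that $\iota(B_{\lL(\lH_0,\lH_1)})$ is closed in the product.

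To this end, let a net $(T_\alpha)$ in $B_{\lL(\lH_0,\lH_1)}$ satisfy $T_\alpha x\dscon y_x$ in $\lH_1$ for every $x\in\lH_0$. By linearity of each $T_\alpha$ and uniqueness of weak limits, for scalars $a,b$ and $x,x'\in\lH_0$ one obtains $y_{ax+bx'} = ay_x+by_{x'}$, so the assignment $T:x\mapsto y_x$ is a linear operator $\lH_0\to\lH_1$. Weak lower semicontinuity of the norm then yields
\[
\|Tx\| = \|y_x\| \leq \liminf_\alpha \|T_\alpha x\| \leq \|x\|,
\]
whence $T\in B_{\lL(\lH_0,\lH_1)}$ and $\iota(T)=(y_x)_x$. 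This establishes compactness of $B_{\lL(\lH_0,\lH_1)}$ in the weak operator topology.

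For the metrisability claim, assume both $\lH_0$ and $\lH_1$ are separable and fix countable dense sequences $(x_n)_n\subset\lH_0$ and $(y_m)_m\subset\lH_1$. Using the uniform bound $\|T\|\leq 1$ together with density, a standard approximation argument shows that convergence in WOT on $B_{\lL(\lH_0,\lH_1)}$ is already detected by the countable family of functionals $T\mapsto\scp{Tx_n}{y_m}$. Consequently,
\[
d(T,S) \coloneqq \sum_{n,m=1}^\infty 2^{-(n+m)}\,\frac{|\scp{(T-S)x_n}{y_m}|}{1+|\scp{(T-S)x_n}{y_m}|}
\]
is a metric on $B_{\lL(\lH_0,\lH_1)}$ inducing WOT, and sequential compactness follows automatically from compactness of a metrisable space.

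The main obstacle is the closedness of $\iota(B_{\lL(\lH_0,\lH_1)})$ in the Tychonoff product, since it is at that point that one must simultaneously push weak limits through linear combinations (via uniqueness of weak limits, to obtain a linear operator) and through the norm bound (via weak lower semicontinuity of the norm, to keep $T$ in the unit ball). Everything else amounts to unwinding definitions and applying standard density arguments.
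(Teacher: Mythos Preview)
The paper does not supply its own proof of this theorem; it is quoted as a known result with a reference to \cite[Theorem 5.1]{W18_NHC} and then merely invoked in the proof of Theorem~\ref{thm:Gconv}. Your argument is the standard Tychonoff/Banach--Alaoglu proof and is correct as written: the embedding into the product of weak balls, the closedness of the image via linearity of weak limits and weak lower semicontinuity of the norm, and the metrisability via a countable separating family of functionals all go through exactly as you describe. There is nothing in the paper to compare against beyond the citation.
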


However, on this level of generality, assumed that there is an a priori bound on $(T_n^{-1})_n$ uniformly in $n$; a subsequence $(T_{n_k}^{-1})_k$ can be chosen and an operator $S\in \lL(\lH)$ can be found such that $T_{n_k}^{-1}f\to Sf$ weakly in $\lH$ for all $f\in \lH$. Whether or not $S$ is suitably invertible with an inverse stemming from an abstract Friedrichs operator is, however, entirely unclear. One of the challenges one might be confronted with here is that 
the graph spaces of the $T_n$ may depend on $n$ and thus, so may $\dom T_n$. Hence, it is not clear in which sense and in which (optimal) space 
to study the convergence of $(u_n)$ towards some/any $u$.
Therefore, a reasonable requirement is that the graph space and the domain are independent of $n$. In this paper we chose to work precisely with one such class of abstract Friedrichs operators, which is 
broad enough to cover many interesting examples. In particular, the class studied in \cite{BVcpaa} is fully contained in our anaylsis. 
In the following subsection we introduce and study the mentioned class of operators.

\subsection{G-convergence of a class of abstract Friedrichs operators}\label{subsec: Gkonv}

In this part we study a class of abstract Friedrichs operators for which we introduce 
the corresponding notion of G-convergence. This subsection is concluded with the main result of the paper, 
the compactness result for G-convergence of the class of abstract Friedrichs operators.

Let $(L_0,\widetilde{L}_0)$ be a pair of linear operators on $\lH$ satisfying (T1) and (T2).
We use the same notation as introduced in Definition \ref{def:abstractFO} and Theorem \ref{thm:abstractFO-prop}(i)--(vi).
However, to avoid confusion in later discussions, we shall denote the parameter appearing in condition (T2) with 
the subscript $L$, i.e.~$\lambda_L$.

For given $\beta>\alpha>0$, let us define a set $\fab$ 
of bounded linear operators on $\lH$ as follows: $C\in\lL(\lH)$ is in
$\fab$ 
if for any $\varphi\in\lH$ we have
\begin{align}
\re\scpp{\Bigl(C+\frac{1}{2}(\overline{L_0+\widetilde{L}_0})\Bigr)\varphi}{\varphi} &\geq \alpha \|\varphi\|^2 \,,
	\label{eq:Fab_coercive}\\
\re\scpp{\Bigl(C+\frac{1}{2}(\overline{L_0+\widetilde{L}_0})\Bigr)\varphi}{\varphi} &\geq \frac{1}{\beta} \Bigl\|\Bigl(C+\frac{1}{2}(\overline{L_0+\widetilde{L}_0})\Bigr)\varphi\Bigr\|^2 \,, \label{eq:Fab_bdd}
\end{align}
where $\re$ denotes the real part of a complex number (which is of course redudant if the spaces are real).
When it is clear from the context what is the underlying Hilbert space $\lH$ and which operator $L_0+\widetilde{L}_0$ is used, we shall use a shorter notation:
$\fabs=\fab$.

\begin{remark}
	By a density argument it suffices to require that inequalities (\ref{eq:Fab_coercive}) and (\ref{eq:Fab_bdd}) describing $\fab$ are valid for $\varphi\in\lD$, and if this is the case, then we can actually replace $\overline{L_0+\widetilde{L}_0}$ by $L_0+\widetilde{L}_0$. Also note that (\ref{eq:Fab_coercive}) and (\ref{eq:Fab_bdd}) are valid if we take $\varphi\in\lW$ and $L_1+\widetilde{L}_1$ instead of $\overline{L_0+\widetilde{L}_0}$, as $L_1+\widetilde{L}_1\subseteq \overline{L_0+\widetilde{L}_0}$; where for notational convenience, $L_1$ and $\widetilde{L}_1$ is defined in parallel to $T_1$ and $\widetilde{T}_1$, that is, $\widetilde{L}_1\coloneqq L_0^*$ and $L_1\coloneqq {\widetilde{L}}_0^*$.
\end{remark}

For the above choice of $(L_0,\widetilde{L}_0)$, let us take an arbitrary $C\in\fabs$. From \eqref{eq:Fab_bdd}, using the Schwarz--Cauchy--Bunjakovski
inequality, we have
$$
\Bigl\|\Bigl(C+\frac{1}{2}(\overline{L_0+\widetilde{L}_0})\Bigr)\varphi\Bigr\| \leq \beta\|\varphi\| \,,
$$ 
which together with (T2) implies that for any $\varphi\in\lH$
\begin{equation*}
\|C\varphi\| \leq (\beta+\lambda_L)\|\varphi\| \;.
\end{equation*}
Hence, operators in $\fabs$ are uniformly bounded:
\begin{equation}\label{eq:Fab_Cbdd}
\sup_{C\in\fabs} \|C\|_{\lL(\lH)} \leq \beta+\lambda_L \;.
\end{equation}

For an arbitrary $C\in\fabs$ let us define operators
\begin{equation}\label{eq:LplusC}
T_0:=L_0+C \qquad \hbox{and} \qquad \widetilde{T}_0:=\widetilde{L}_0+C^* \;.
\end{equation}
Since $C$ is bounded, it is clear that both $T_0$ and $\widetilde{T}_0$ are densely defined, with
$\dom T_0 = \dom \widetilde{T}_0 = \dom L_0 = \lD$.
Moreover, it is easy to verify that $(T_0,\widetilde{T}_0)$ is a joint pair of abstract Friedrichs operators. 
Indeed, for any $\varphi,\psi\in\lD$ we have
$$
\scp{T_0\varphi}{\psi} = \scp{L_0\varphi}{\psi} + \scp{C\varphi}{\psi}
	= \scp{\varphi}{\widetilde{L}_0\psi} + \scp{\varphi}{C^*\psi} = \scp{\varphi}{\widetilde{T}_0\psi} \,,
$$
implying that condition (T1) is fulfilled. Since $T_0+\widetilde{T}_0$ is a restriction of the sum of two (bounded and) 
self-adjoint operators $\overline{L_0+\widetilde{L}_0}$ and $C+C^*$, condition (T3) is directly implied by 
\eqref{eq:Fab_coercive} and the corresponding constant, denoted by $\mu_T$, agrees with $\alpha$.
Finally, using \eqref{eq:Fab_Cbdd} and the fact that $\|C^*\|_{\lL(\lH)}=\|C\|_{\lL(\lH)}$, we have
$$
\|(T_0+\widetilde{T}_0)\varphi\| \leq 2(2\lambda_L+\beta)\|\varphi\| \,. 
$$

Note also that 
$$
T_1:=\widetilde{T}_0^*=L_1+C \,, \quad \widetilde{T}_1 :=T_0^*=\widetilde{L}_1+C^* \,,
$$
and thus $\dom T_1=\dom \widetilde{T}_1=\dom L_1=\lW$.
Furthemore, it is clear that the indefinite inner product \eqref{eq:D} is independent of $C\in\fabs$:
\begin{equation}\label{eq:D_L}
\iscp uv \;=\; \scp{L_1u}{v} 
- \scp{u}{\widetilde{L}_1v} \;,
\quad u,v\in\lW \,.
\end{equation}
That in particular implies that pairs of subspaces $(\lV,\widetilde{\lV})$ from Theorem \ref{thm:abstractFO-prop}(x)
are independent of $C$, since all conditions describing them are given only in terms of the indefinite inner product. 
In order to simplify notation, for a given $\lV$ we shall denote $L_1|_\lV$ by $L$, and the same for $T_1$.

Note that the corresponding parameters $\lambda_T$ and $\mu_T$ appearing in conditions (T2) and (T3), repectively, 
are both independent of $C$, i.e.~they depend only on $\lambda_L$, $\alpha$ and $\beta$. More precisely, 
from the above we have $\lambda_T=2\lambda_L+ \beta$ and $\mu_T=\alpha$.
The former implies that the graph norms $\|\cdot\|_{T_1}$ and $\|\cdot\|_{L_1}$ are equivalent on $\lW$ with constants not depending on $C\in\fabs$, while the latter implies that the a priori estimate \eqref{eq:apriori} 
is also uniform with respect to $C\in\fabs$. 
Thus, the overall conclusion is that there exists $\gamma=\gamma(\lambda_L,\alpha,\beta)>0$, independent of $C\in\fabs$,
such that for any pair of subspaces $(\lV,\widetilde{\lV})$ satisfying assumptions of 
Theorem \ref{thm:abstractFO-prop}(x) the following a priori estimate holds
\begin{equation}\label{eq:Fab_apriori}
	\|u\|_{L_1} \leq \gamma \|(L_1+C)u\| \,, \quad u\in\lV \,,
\end{equation} 
and analogously for $\widetilde{L}_1$ and $\widetilde{\lV}$.

We are now ready to introduce the notion of G-convergence for a sequence of operators of the form
\eqref{eq:LplusC}. 

\begin{definition}[{\bf G-convergence for Friedrichs operators}]\label{def:Gconv}
Let $(L_0,\widetilde{L}_0)$ be a pair of linear operators satisfying {\rm (T1)} and {\rm (T2)} on a complex Hilbert space $\lH$
and let $\lV$ be a subspace of $\lW$ satisfying {\rm (V)} (see Theorem \ref{thm:abstractFO-prop}(x)). 
We define $L:=L_1|_\lV$.

For a sequence $(C_n)_n$ in $\fab$ we say that the sequence of isomorphisms
$$
T_n:=L+C_n:\lV\to\lH
$$
\emph{G-converges} to an isomorphism $T:=L+C:\lV\to\lH$, for some 
$C\in\fab$, if the (bounded) inverse operators
$T_n^{-1}:\lH\to\lV$ converge in the weak sense:
\begin{equation*}
(\forall f\in\lH) \qquad T_n^{-1}f\dscon T^{-1}f \quad \hbox{in $\lW$} \;.
\end{equation*}
\end{definition}

The compactness of G-convergence was shown in \cite{BVcpaa} for a specific choice of $L_0$ 
(first order differential operator with constant coefficients) and under an additional assumption that
(for fixed $L_0$ and $\lV$ satisfying (V)), the family $\fabs$ has the following property:
\begin{itemize}
	\item [(K1)] for every sequence $C_n\in \fabs$, and every $f\in \lH$, the sequence $(u_n)$ 
	in $\lV$ defined by $u_n:=(L + C_n)^{-1}f$ satisfies the following: 
	if $(u_n)$ weakly converges to $u$ in $\lW$, then also
	$$
	\iscp{u_n}{u_n} \str \iscp{u}{u}\,.
	$$
	
\end{itemize}
By using Theorem \ref{tm:iskp-cont} given below we prove that this assumption is redundant even for the above (abstract)
class of operators.

\begin{theorem}[\textbf{Compactness of G-convergence}]\label{thm:Gconv}
Using the notation of the previous definition, let $\lH$ be a separable Hilbert space and let $L_0$ and $\lV$ be fixed. 
For any sequence $(C_n)_n$ in $\fab$ there exists a subsequence of $T_n:=L+C_n$ which G-converges 
to $T:=L+C$ with $C\in\fab$.
\end{theorem}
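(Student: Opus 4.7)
The plan is to follow the blueprint of the proof in \cite{BVcpaa}, but with hypothesis (K1) replaced by the intrinsic semi-continuity of $\iscp{\cdot}{\cdot}$ supplied by Theorem \ref{tm:iskp-cont}.

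\emph{Uniform bounds and extraction.} By \eqref{eq:Fab_Cbdd} the family $(C_n)_n$ is bounded in $\lL(\lH)$, and the uniform a priori estimate \eqref{eq:Fab_apriori} gives $\sup_n \|T_n^{-1}\|_{\lL(\lH,\lW)} < \infty$. Since $\lH$ is separable, so is $\lW$. Applying Theorem \ref{thm:seqcomp} together with a diagonal argument, I would extract a subsequence (not relabelled) along which $C_n \dscon C$ and $C_n^* \dscon C^*$ in the weak operator topology of $\lL(\lH)$, while $T_n^{-1} \dscon S$ in the weak operator topology of $\lL(\lH,\lW)$. Passing to the limit in \eqref{eq:Fab_coercive} and \eqref{eq:Fab_bdd}---the former being linear in $C$ at fixed $\varphi$, the latter benefiting from weak lower semi-continuity of $\|\cdot\|^2$ on its right-hand side---shows that $C \in \fab$. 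By Theorem \ref{thm:abstractFO-prop}(x), $L+C\colon \lV \to \lH$ is then an isomorphism, and the remaining task is to establish $S = (L+C)^{-1}$.

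\emph{Identifying the limit.} Fix $f \in \lH$ and set $u_n \coloneqq T_n^{-1} f$, $u \coloneqq Sf$. Since $\lV$ is weakly closed, $u \in \lV$; by Remark \ref{rem:Wweak}, $L u_n \dscon Lu$ in $\lH$, so from $(L+C_n)u_n = f$ I deduce $C_n u_n \dscon g \coloneqq f - Lu$ in $\lH$. The goal is $g = Cu$, which will give $(L+C)u = f$, hence $u = (L+C)^{-1}f$ and $S = (L+C)^{-1}$. Testing the equation by $u_n$ and taking real parts, together with the Hermitian character of $\iscp{\cdot}{\cdot}$ on $\lW$, yields the exact identity
\[
\re\scp{M_n u_n}{u_n} = \re\scp{f}{u_n} - \tfrac{1}{2}\iscp{u_n}{u_n}, \qquad M_n \coloneqq C_n + \tfrac{1}{2}(L_1 + \widetilde L_1).
\]
The linear term converges by weak convergence of $u_n$, and the quadratic form $\iscp{u_n}{u_n}$ is controlled by Theorem \ref{tm:iskp-cont}. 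Combining this with the bound \eqref{eq:Fab_bdd} for $M_n$ evaluated on perturbations $u_n - \varphi$ with $\varphi \in \lH$ produces a variational inequality for the weak limits which, together with the strict accretivity of $M \coloneqq C + \tfrac{1}{2}(L_1 + \widetilde L_1)$, allows one to identify the weak limit of $M_n u_n$ as $Mu$. This is equivalent to $g = Cu$, as required.

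\emph{Main obstacle.} The essential difficulty is the weak-times-weak pairing $\scp{C_n u_n}{u_n}$: the weak operator convergence of $C_n$ and the weak convergence of $u_n$ in $\lH$ do not by themselves imply $\scp{C_n u_n}{u_n} \to \scp{Cu}{u}$, and there is no compact embedding $\lW \hookrightarrow \lH$ to fall back on. The identity derived above reduces this problematic pairing, modulo a linear term in $u_n$, to the boundary quantity $\iscp{u_n}{u_n}$, so the whole argument hinges on semi-continuity of $\iscp{\cdot}{\cdot}$ along weakly convergent sequences in $\lW$. That this property holds automatically within the abstract Friedrichs framework (Theorem \ref{tm:iskp-cont}) is precisely what enables (K1) to be dispensed with.
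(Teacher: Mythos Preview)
There is a genuine gap. You take $C$ to be the weak operator limit of $(C_n)_n$ and then try to prove $S=(L+C)^{-1}$. But the $G$-limit coefficient is \emph{not} the weak operator limit of $C_n$ in general --- this is the whole content of homogenisation with memory effects. In the example of Section~\ref{sec:classFOs}, the $C_n$ are multiplication operators by $c_n(x)$; their weak operator limit is multiplication by the weak-$*$ limit $\bar c$, yet the $G$-limit $C$ is a nonlocal (convolution-type) operator. So the identity $g=Cu$ you aim for is simply false for your choice of $C$.

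The place where your Minty-type sketch breaks is the cross term $\re\scp{M_n\varphi}{u_n}$: you only have $M_n\varphi\rightharpoonup M\varphi$ weakly (from weak operator convergence of $C_n$) and $u_n\rightharpoonup u$ weakly, and a weak--weak pairing does not pass to the limit. Without that term you cannot reach the variational inequality $\re\scp{h-M\varphi}{u-\varphi}\geq \tfrac1\beta\|h-M\varphi\|^2$ needed to conclude $h=Mu$ at $\varphi=u$. (And indeed it must fail, since the conclusion is false in the memory example.)

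The paper avoids this by never invoking any limit of $C_n$. It extracts only $T_n^{-1}\rightharpoonup B$, sets $Kf:=f-LBf$, uses Theorem~\ref{tm:iskp-cont} to obtain the one-sided estimate \eqref{limsup-ineq}, proves $B$ is injective, and \emph{defines} $C$ on $\ran B$ by $C(Bf):=Kf$. The inequalities \eqref{eq:Fab_coercive}--\eqref{eq:Fab_bdd} for this $C$ are then consequences of \eqref{limsup-ineq}, and density of $\ran B$ is established separately. The key conceptual point you are missing is that $C$ must be \emph{constructed from the limit problem}, not read off as a limit of the $C_n$.
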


Notice that in the previous theorem for the first time we have an additional assumption on $\lH$:
separability. It is a technical requirement that allows the application of the Cantor diagonal procedure, which in turn merely describes the fact that bounded subsets of bounded linear operators on $\lH$ are relatively sequentially compact under the weak operator topology, see Theorem \ref{thm:seqcomp} above.

The proof of Theorem \ref{thm:Gconv} we postpone to Section \ref{sec:proof-Gconv}. 
Before, in the following section, we deal with a suitable replacement of condition (K1) and show that this replacement (in terms of an inequality) is always satisfied.

We close this section by addressing an immediate consequence of Theorem \ref{thm:Gconv} for a special case. The upshot of considering this particular situation is that its formulation is fairly independent of the notion of (abstract) Friedrichs operators. This corollary may thus be used as a first approximation of the more general Theorem \ref{thm:Gconv}. For this, we recall a linear operator $A$ on a Hilbert space $\lH$ is \emph{$m$-accretive}, if $A$ is densely defined and closed with
\[
   \re \scp{A\phi}\phi \geq 0 \text{ and }   \re \scp{A^*\psi}\psi \geq 0
\]for all $\phi\in \dom(A)$, $\psi\in \dom(A^*)$.\footnote{There are many equivalent ways to define `m-accretive'. We chose this definition as it is clearly symmetric in $A$ and $A^*$.}

\begin{corollary}\label{cor:sksa} Let $\lH$ be a separable Hilbert space, $A_0\colon \dom(A_0)\subseteq \lH\to \lH$ be closed and skew-symmetric. Let $A$ be  $m$-accretive with $A_0\subseteq A\subseteq -A_0^*$.

If $(C_n)_n$ is a sequence in $\lL(\lH)$ with
\[
   \re \scp{C_n \phi}\phi \geq \max\Bigl\{\alpha\|\phi\|^2,\frac{1}{\beta}\|C_n\phi\|^2\Bigr\}\quad(\phi\in \lH),
\]then there exists a subsequence $(C_{n_k})_k$ and $C\in \lL(\lH)$ satisfying the same inequalities as $C_n$ such that
\[
   (C_{n_k}+A)^{-1}\to (C+A)^{-1}
\]in the weak operator topology.
\end{corollary}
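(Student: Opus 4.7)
The plan is to realise the corollary as a special case of Theorem \ref{thm:Gconv} by setting $L_0 \coloneqq A_0$ and $\widetilde{L}_0 \coloneqq -A_0$. Since $A_0$ is densely defined (implicit in the formation of $A_0^*$) and skew-symmetric, the pair $(L_0,\widetilde{L}_0)$ satisfies (T1), while (T2) is trivial with any $\lambda_L>0$ because $L_0+\widetilde{L}_0 = 0$ on $\dom(A_0)$. Consequently $L_1 = \widetilde{L}_0^* = -A_0^*$ and $\widetilde{L}_1 = L_0^* = A_0^*$, both having as domain the graph space $\lW = \dom(A_0^*)$.

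Next I would identify the constraint spaces $\lV \coloneqq \dom(A)$ and $\widetilde{\lV} \coloneqq \dom(A^*)$. The inclusion $A \subseteq -A_0^*$ immediately gives $L_1|_\lV = A$, while taking adjoints in $A_0 \subseteq A$ yields $A^* \subseteq A_0^* = \widetilde{L}_1$, so $\widetilde{L}_1|_{\widetilde{\lV}} = A^*$. Since $A$ is $m$-accretive it is closed and densely defined, whence $A^{**} = A$ and $A, A^*$ are mutually adjoint. By Remark \ref{rem:VtildeV-existence} this mutual adjointness is precisely condition (V) for the chosen $\lV$.

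To verify (V1) I would use $m$-accretivity of both $A$ and $A^*$: for $u \in \lV$ the identity \eqref{eq:D_L} evaluates to $\iscp{u}{u} = \scp{Au}{u} + \scp{u}{Au} = 2\re\scp{Au}{u} \geq 0$, and analogously $\iscp{v}{v} = -2\re\scp{A^*v}{v} \leq 0$ for $v \in \widetilde{\lV}$, where the accretivity of $A^*$ is built into the symmetric definition of $m$-accretive adopted here. Moreover, because $L_0+\widetilde{L}_0 = 0$, the inequalities \eqref{eq:Fab_coercive} and \eqref{eq:Fab_bdd} defining $\fab$ reduce to exactly the two inequalities assumed on $(C_n)_n$, so $(C_n)_n \subseteq \fab$.

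With all the hypotheses of Theorem \ref{thm:Gconv} verified, one extracts a subsequence $(C_{n_k})_k$ and $C \in \fab$ such that $(A+C_{n_k})^{-1}f \dscon (A+C)^{-1}f$ in $\lW$ for every $f \in \lH$; by Remark \ref{rem:Wweak} this implies weak convergence in $\lH$, which is exactly convergence in the weak operator topology. Membership $C \in \fab$ translates, again via $L_0+\widetilde{L}_0 = 0$, into $C$ satisfying the same two inequalities as the $C_n$, finishing the argument. I do not anticipate a serious obstacle here; the substantive step is simply the translation between the $m$-accretive picture and the Friedrichs framework via the mutual-adjointness criterion of Remark \ref{rem:VtildeV-existence}.
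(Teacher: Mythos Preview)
Your proposal is correct and follows essentially the same approach as the paper's own proof: set $L_0=A_0$, $\widetilde{L}_0=-A_0$, take $\lV=\dom(A)$, invoke Remark~\ref{rem:VtildeV-existence} (mutual adjointness of $A$ and $A^*$) together with accretivity for (V1), and apply Theorem~\ref{thm:Gconv}. The paper's proof is a two-line sketch; you have simply filled in the details that are implicit there.
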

\begin{proof}
The assumptions of Theorem \ref{thm:Gconv} are satisfied with $L_0\coloneqq A_0$ and $\widetilde{L}_0=-A_0$ (note that $L_0+\widetilde{L}_0=0$), and $\lV=\dom(A)$; also recall Remark \ref{rem:VtildeV-existence} for the condition (V) keeping in mind the accretivity of $A$ (and $A^*\subseteq A_0^*$).
\end{proof}

\section{An inequality for the boundary operator}\label{sec:inequalitybdyop}

\begin{theorem}\label{tm:iskp-cont}
Let $(T_0,\widetilde{T}_0)$ be a joint pair of abstract Friedrichs operators on $\lH$
and let $\lV$ be a subspace of $\lW$ satisfying condition {\rm (V)}.

Then for any weakly converging sequences $(u_n)_n$ in $\lV$ 
and $(v_n)_n$ in $\widetilde{\lV}:=\lV^{[\perp]}$ we have
\begin{align*}
	\liminf_{n\to\infty} \,\iscp{u_n}{u_n} &\geq \iscp{u}{u} \;, \\
	\limsup_{n\to\infty} \,\iscp{v_n}{v_n} &\leq \iscp{v}{v} \;,
\end{align*}
where $u\in \lV$ and $v\in\widetilde{\lV}$ are the weak limits of 
$(u_n)_n$ and $(v_n)_n$, respectively, i.e.~$u_n\overset{\lW}{\dscon} u$
and $v_n\overset{\lW}{\dscon} v$.
\end{theorem}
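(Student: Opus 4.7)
The plan is to reduce the two semi-continuity inequalities to the signed character of the indefinite inner product on $\lV$ and $\widetilde{\lV}$ by expanding $\iscp{\cdot}{\cdot}$ around the weak limit and checking that the mixed terms vanish. As a preliminary observation, by Remark \ref{rem:Vcond} together with Theorem \ref{thm:abstractFO-prop}(vi) both $\lV$ and $\widetilde{\lV}$ are norm-closed (hence weakly closed) linear subspaces of $\lW$, so the weak limits $u$ and $v$ indeed lie in $\lV$ and $\widetilde{\lV}$ respectively, and $w_n:=u_n-u$, $\widetilde w_n:=v_n-v$ remain in the respective subspaces.

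Next, I would exploit sesquilinearity of $\iscp{\cdot}{\cdot}$ to write
\[
\iscp{u_n}{u_n} \;=\; \iscp{u}{u} + \iscp{u}{w_n} + \iscp{w_n}{u} + \iscp{w_n}{w_n},
\]
and analogously for $\iscp{v_n}{v_n}$ with $\widetilde w_n$ in place of $w_n$. Since $w_n\in\lV$, the last term satisfies $\iscp{w_n}{w_n}\geq 0$ by (V1); since $\widetilde w_n\in\widetilde{\lV}$, the corresponding term is $\leq 0$. Therefore, once I show that the two cross terms tend to zero, the claimed $\liminf$- and $\limsup$-inequalities follow at once.

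The step I expect to be the main (though still mild) technical point is the vanishing of the cross terms. Unravelling the definition \eqref{eq:D}, each cross term is a difference of two $\lH$-inner products in which one factor is fixed in $\lH$ and the other is drawn from a sequence that is weakly null in $\lH$. From Remark \ref{rem:Wweak} I get directly $w_n\dscon 0$ and $T_1 w_n\dscon 0$ in $\lH$; what is needed in addition is $\widetilde{T}_1 w_n\dscon 0$ in $\lH$, which is not part of that characterisation. For this I would invoke Theorem \ref{thm:abstractFO-prop}(v): the operator $T_0+\widetilde T_0$ extends to a bounded self-adjoint operator on all of $\lH$ that agrees with $T_1+\widetilde T_1$ on $\lW$, so $(T_1+\widetilde T_1)w_n\dscon 0$ in $\lH$ and hence $\widetilde T_1 w_n=(T_1+\widetilde T_1)w_n-T_1 w_n\dscon 0$ in $\lH$ too. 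Standard fixed-vector-versus-weakly-null-sequence pairing then gives $\iscp{u}{w_n}\to 0$ and $\iscp{w_n}{u}\to 0$, and exactly the same argument handles the cross terms in the $(v_n)$ case. Combined with the signed lower/upper bound on the quadratic terms above, this yields both inequalities simultaneously.
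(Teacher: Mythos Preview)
Your argument is correct and considerably more direct than the paper's. You expand $\iscp{u_n}{u_n}$ around the weak limit, note that the cross terms vanish because $\iscp{\cdot}{\cdot}$ is separately continuous for the weak topology of $\lW$ (the only subtlety being $\widetilde{T}_1 w_n\dscon 0$, which you correctly deduce from the boundedness of $T_1+\widetilde{T}_1$ in part (v)), and then use only the sign condition (V1) on the remainder $\iscp{w_n}{w_n}$. Nothing beyond sesquilinearity, (V1), closedness of $\lV,\widetilde{\lV}$, and parts (iv)--(vi) of Theorem~\ref{thm:abstractFO-prop} is needed.

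The paper takes a genuinely different and more structural route. It invokes the decomposition $\lW=\lW_0\dotplus\ker T_1\dotplus\ker\widetilde{T}_1$ from part (ix), builds a bounded linear map $\widetilde{S}:\ker\widetilde{T}_1\to\lH$ that records how the two kernel components of an element of $\lV$ determine one another, and assembles from this a bounded, self-adjoint, positive semi-definite operator $\widetilde{Q}$ on $\ker\widetilde{T}_1$ with the representation $\iscp{u}{u}=\|\widetilde{Q}^{1/2}p_{\mathrm{\tilde k}}(u)\|^2$ for all $u\in\lV$. Lower semicontinuity then follows from weak lower semicontinuity of the $\lH$-norm combined with $p_{\mathrm{\tilde k}}\in\lL(\lW,\lH)$. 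That approach yields an explicit Hilbert-space-norm realisation of $\iscp{\cdot}{\cdot}|_\lV$, which is of independent interest and explains structurally \emph{why} the form is lower semicontinuous; your expansion argument, by contrast, is shorter, uses less machinery (no need for (ix) or the projections), and in fact works for any closed non-negative subspace, not only those satisfying the full condition (V).
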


\begin{proof}
\noindent \textbf{Step I.} By Theorem \ref{thm:abstractFO-prop}(x),(xi) we have 
$\lV\dot{+}\ker T_1 =\lW$. Thus, for any $\tilde\nu\in\ker \widetilde{T}_1$
there exist $u\in\lV$ and $\nu\in\ker T_1$ such that
$u+\nu = \tilde \nu$. This implies $-\nu+\tilde\nu =u \in\lV$. Hence, recalling
the decomposition \eqref{eq:decomposition} we have $p_\mathrm{\tilde k}(u)=\tilde \nu$,
implying $p_\mathrm{\tilde k}(\lV)=\ker \widetilde{T}_1$.

Analogously we have $p_\mathrm{k}(\widetilde{\lV})=\ker T_1$.
\smallskip

\noindent \textbf{Step II.} Now we will focus on the claim for $\lV$, while 
the main differences in the argument for the other one will be addressed in the 
final step of the proof. 

We define an operator $\widetilde{S}:\ker\widetilde{T}_1 \to\lH$ with
\begin{equation}\label{eq:opS}
	\widetilde{S}(p_\mathrm{\tilde k}(u)) := p_\mathrm{k}(u) \;, \quad u\in\lV\,,
\end{equation}
where $p_\mathrm{k}$ and $p_\mathrm{\tilde k}$ are given by \eqref{eq:projections}.

To show that $\widetilde{S}$ is a well-defined function on its domain $p_{\tilde k}(\lV)=\ker \widetilde{T}_1$ 
(here the previous step is used), it is 
sufficient to prove that for any $u,u'\in\lV$ such that
$p_\mathrm{\tilde k}(u)=p_\mathrm{\tilde k}(u')$ we have 
$p_\mathrm{k}(u)=p_\mathrm{k}(u')$. By \eqref{eq:decomposition} we have
\begin{align*}
u &= u_0 + p_\mathrm{k}(u) + p_\mathrm{\tilde k}(u) \\
u' &= u_0' + p_\mathrm{k}(u') + p_\mathrm{\tilde k}(u') \;,
\end{align*}
where $u_0, u_0'\in\lW_0$. By subtracting the equations we get
\begin{equation*}
p_\mathrm{k}(u) - p_\mathrm{k}(u') \in \lV \cap \ker T_1 \;,
\end{equation*}
where we have used $p_\mathrm{\tilde k}(u)=p_\mathrm{\tilde k}(u')$
and $\lW_0\subseteq \lV$ (see Remark \ref{rem:Vcond}). 
However, since $\lV \cap \ker T_1 = \{0\}$, we obtain the claim. 

Furthermore, $\widetilde{S}$ is linear. Indeed, this is a simple consequence of the 
linearity of the projections. 
\smallskip

\noindent \textbf{Step III.} In this step we show that $\widetilde{S}:\ker \widetilde{T}_1\to\lH$ 
is bounded (with respect to $\|\cdot\|$ norm), for which condition (V1) will play a crucial role. 

For any $u\in\lW$ we have
\begin{equation*}
	\iscp{u}{u} = \iscp{p_\mathrm{k}(u) + p_\mathrm{\tilde k}(u)}{p_\mathrm{k}(u) + p_\mathrm{\tilde k}(u)}
	= \iscp{p_\mathrm{k}(u)}{p_\mathrm{k}(u)} + \iscp{p_\mathrm{\tilde k}(u)}{p_\mathrm{\tilde k}(u)} \;,
\end{equation*}
where in the first equality we have used $\lW_0^{[\perp]}=\lW$ (Theorem \ref{thm:abstractFO-prop}(vi)), 
while in the second $\ker \widetilde{T}_1[\perp] \ker T_1$
(see Theorem \ref{thm:abstractFO-prop}(ix)).
Furthermore, for any $\tilde\nu\in\ker\widetilde{T}_1$ it holds
\begin{equation*}
\iscp{\tilde \nu}{\tilde \nu} = \scp{T_1 \tilde\nu}{\tilde \nu} 
	= \scp{(T_1+\widetilde{T}_1)\tilde\nu}{\tilde\nu} \,,
\end{equation*} 
and similarly 
\begin{equation*}
\iscp{\nu}{\nu} = -\scp{(T_1+\widetilde{T}_1)\nu}{\nu} \;, \quad
	\nu\in\ker T_1
\end{equation*}
(here in addition we have used that $\overline{T_0+\widetilde{T}_0}$ is self-adjoint and 
$\overline{T_0+\widetilde{T}_0}|_\lW=T_1+\widetilde{T}_1$).
Thus, for any $u\in\lW$ we have
\begin{equation}\label{eq:DonW}
\iscp{u}{u} = \scp{(T_1+\widetilde{T}_1)p_\mathrm{\tilde k}(u)}{p_\mathrm{\tilde k}(u)}
	- \scp{(T_1+\widetilde{T}_1)p_\mathrm{k}(u)}{p_\mathrm{k}(u)} \;.
\end{equation}

If we restrict to $\lV$, then $\iscp{\cdot}{\cdot}$ is non-negative. Hence, for any 
$u\in\lV$ from the above equality we get
$$
\scp{(T_1+\widetilde{T}_1)p_\mathrm{k}(u)}{p_\mathrm{k}(u)} 
	\leq \scp{(T_1+\widetilde{T}_1)p_\mathrm{\tilde k}(u)}{p_\mathrm{\tilde k}(u)}
$$
(note that on both sides we have real numbers since $\overline{T_0+\widetilde{T}_0}$ is self-adjoint
and $T_1+\widetilde{T}_1\subseteq \overline{T_0+\widetilde{T}_0}$).
Using boundedness of $\overline{T_0+\widetilde{T}_0}$ from below and above and 
$T_1+\widetilde{T}_1\subseteq \overline{T_0+\widetilde{T}_0}$ (see Theorem \ref{thm:abstractFO-prop}(v),(viii)), 
we obtain
\begin{equation*}
\|p_\mathrm{k}(u)\|^2 \leq \frac{\lambda}{\mu} \|p_\mathrm{\tilde k}(u)\|^2 \;,
	\quad u\in\lV \;.
\end{equation*}
Thus, the mapping $\widetilde{S}:\ker\widetilde{T}_1\to\lH$ given by \eqref{eq:opS} is bounded (with respect to the 
norm $\|\cdot\|$).
Its adjoint $\widetilde{S}^*:\lH\to\ker\widetilde{T}_1$ is then bounded as well.

Let us denote by $\tilde{\iota}:\ker \widetilde{T}_1\hookrightarrow\lH$ the canonical embedding.
Then the operator
$$
\widetilde{Q}:=\tilde\iota^* (T_1+\widetilde{T}_1)\tilde\iota-\widetilde{S}^*(T_1+\widetilde{T}_1)\widetilde{S}
$$
is bounded and self-adjoint as an operator
$(\ker \widetilde{T}_1,\|\cdot\|)\to (\ker \widetilde{T}_1,\|\cdot\|)$.
\smallskip

\noindent\textbf{Step IV.} Returning to \eqref{eq:DonW}, but for $u\in\lV$, we have
\begin{align*}
0\leq \iscp{u}{u} &= -\scp{(T_1+\widetilde{T}_1)p_\mathrm{k}(u)}{p_\mathrm{k}(u)} 
	+\scp{(T_1+\widetilde{T}_1)p_\mathrm{\tilde k}(u)}{p_\mathrm{\tilde k}(u)} \\
&= -\scp{(T_1+\widetilde{T}_1)\widetilde{S}p_\mathrm{\tilde k}(u)}{\widetilde{S} p_\mathrm{\tilde k}(u)} 
	+\scp{(T_1+\widetilde{T}_1)\tilde\iota\, p_\mathrm{\tilde k}(u)}{\tilde\iota\, p_\mathrm{\tilde k}(u)} \\
&= -\scp{\widetilde{S}^*(T_1+\widetilde{T}_1)\widetilde{S}p_\mathrm{\tilde k}(u)}{p_\mathrm{\tilde k}(u)} 
	+\scp{\tilde\iota^*(T_1+\widetilde{T}_1)\tilde\iota\, p_\mathrm{\tilde k}(u)}{p_\mathrm{\tilde k}(u)} \\
&=\scp{\widetilde{Q} p_\mathrm{\tilde k}(u)}{p_\mathrm{\tilde k}(u)} \,.
\end{align*}
Since $p_\mathrm{\tilde k}(\lV)=\ker \widetilde{T}_1$, this implies that 
the operator $\widetilde{Q}$ is bounded, self-adjoint and positive semi-definite.
Thus, its square root $\widetilde{Q}^{1/2}$ is well-defined and bounded.

Therefore, the overall conclusion is that for any $u\in \lV$ we have
\begin{equation*}
\iscp{u}{u} = \|\widetilde{Q}^{1/2} p_\mathrm{\tilde k}(u)\|^2 \;.
\end{equation*}

Since the norm is weakly sequentially lower semicontinuous, it is left 
to see that $u_n\overset{\lW}{\dscon} u$ implies 
$\widetilde{Q}^{1/2} p_\mathrm{\tilde k}(u_n)\overset{\lH}{\dscon} \widetilde{Q}^{1/2} p_\mathrm{\tilde k}(u)$. 
However, that is an immediate consequence of the continuity of the linear operator 
$\widetilde{Q}^{1/2} p_\mathrm{\tilde k}$ with respect to the graph norm in the domain 
and the norm $\|\cdot\|$ of $\lH$ in the codomain (see Theorem \ref{thm:abstractFO-prop}(ix)).
\smallskip

\noindent\textbf{Step V.} Let us now briefly explain the argument for the second claim.
When studying $\widetilde{\lV}$, one needs to replace the operator 
$\widetilde{S}$ (given by \eqref{eq:opS}) by 
\begin{equation*}
	S(p_\mathrm{k}(u)) := p_\mathrm{\tilde k}(u) \;, \quad u\in\widetilde\lV\,,
\end{equation*}
and change $\tilde{\iota}$ to the canonical embedding of $\ker T_1$, denoted by $\iota$.
Then $Q:=\iota^* (T_1+\widetilde{T}_1)\iota-S^*(T_1+\widetilde{T}_1)S$ is again bounded, self-adjoint 
and positive semi-definite, but now on $\ker T_1$.
The claim follows from
\begin{equation*}
\iscp{u}{u} = - \|Q^{1/2} p_\mathrm{k}(u)\|^2 \;, \quad u\in\widetilde{\lV} \,.\qedhere
\end{equation*}
\end{proof}

\begin{remark}
The statement of the previous theorem holds for more general subspaces $\lV$ and $\widetilde{\lV}$.
Namely, we can take any closed subspaces $\lV$ and $\widetilde{\lV}$ in $\lW$
which satisfy condition  (V1), i.e.~$\lV$ and $\widetilde{\lV}$ are non-negative and non-positive 
with respect to $\iscp{\cdot}{\cdot}$, respectively.

This can be justified by noticing that the same proof applies also in this setting with the only 
difference that now it might be $p_\mathrm{k}(\widetilde{\lV})\neq \ker T_1$. Hence the operator
$\widetilde{S}$ will be defined on $\overline{p_\mathrm{k}(\widetilde{\lV})}$ (the closure is taken in $\lH$),
and analogously for $S$. However, this does not affect further arguments of the proof.

Alternatively, we could also use the following argument: by Zorn's 
lemma any non-negative subspace $\lV$
of $\lW$ can be extended to a maximal non-negative 
subspace $\lV_1$. This subspace by \cite[Theorem 2(b)]{ABcpde}
satisfies (V) condition, hence Theorem \ref{tm:iskp-cont}
is applicable on $\lV_1$. Since $\lV$ is a closed 
subspace of $\lV_1$, the claim follows.
\end{remark}

\begin{remark}
Since it was shown in Subsection \ref{subsec: Gkonv} that for the sequence of operators $(T_n)_n$
appearing in Definition \ref{def:Gconv} the indefinite inner product does not depend on $n$, 
the previous theorem is applicable. 
Therefore, this shows that the weakened variant of condition {\rm (K1)} is always satisfied.
\end{remark}

\section{Proof of the main theorem --- Theorem \ref{thm:Gconv}}\label{sec:proof-Gconv}

The proof follows the steps of the proof given in \cite[Theorem 5]{BVcpaa}, which was inspired by 
the original proof of Spagnolo in the case of parabolic $G$-convergence.
We present the proof in full detail as
our setting here is substantially more general than in \cite{BVcpaa} and it is important to deal with the 
precise analysis in order to verify that we can relax the equality in the condition (K1)
to an inequality, see Theorem \ref{tm:iskp-cont}. Let us start.

By the a priori estimate \eqref{eq:Fab_apriori}, $(T_n^{-1})_n$ is bounded in $\lL(\lH,\lW)$ with some bound $\gamma\geq 0$, depending on $\lambda_L$, $\alpha$ and $\beta$. Thus, by Theorem \ref{thm:seqcomp} using the separability of $\lH$, one can pass to a subsequence (for which we keep the same notation) such that $T_n^{-1}\to B \in \lL(\lH,\lW)$ in the weak operator topology, i.e., for all $f \in \lH$,  
$$
u_n\coloneqq T_n^{-1} f\dscon Bf \eqqcolon u_f
$$ weakly in $\lW$; by Theorem \ref{thm:seqcomp}, $\|B\|\leq \gamma$.
As $\lV$ is closed in $\lW$ (see Remark \ref{rem:Vcond}), 
and therefore weakly closed, we also have $B(\lH) \subseteq \lV$.


Let us define a bounded linear operator $K\in\lL(\lH)$ by $K f:=f-L u_f=f-L B f$.
Then
\begin{equation}\label{jed}
L u_n+C_n u_n= f=L u_f+K f\,,
\end{equation}
and as $L u_n \dscon L u_f$ in $\lH$ (see Remark \ref{rem:Wweak}; here we used that $\lW$ is the graph space of $L_1$), it follows
\begin{equation}\label{Cn-kvg-K}
C_n u_n\dscon K f\quad\hbox{in } \lH\,.
\end{equation}
Since $(L_1+\widetilde{L}_1)u_n\dscon (L_1+\widetilde{L}_1)u_f$ in $\lH$ (as $\overline{L_0+\widetilde{L}_0}\in\lL(\lH)$,
$L_1+\widetilde{L}_1\subseteq \overline{L_0+\widetilde{L}_0}$ and $u_n\dscon u_f$ in $\lW$), then clearly
\begin{equation}\label{Cn-kvg}
\Bigl(C_n+\frac{1}{2}(L_1+\widetilde{L}_1)\Bigr)u_n\dscon Kf +\frac{1}{2}(L_1+\widetilde{L}_1)u_f\quad\hbox{in } \lH\,.
\end{equation}
Multiplying the left equality in (\ref{jed}) by $u_n$, and the right by
$u_f$, for the real parts we get the following:
\begin{align}
\re\scp{Lu_n}{u_n}+\re\scp{C_n u_n}{u_n} &=\re\scp{f}{u_n}\,, \label{jed-un} \\
\re\scp{Lu_f}{u_f}+\re\scp{Kf}{u_f} &= \re\scp{f}{u_f}\,. \label{jed-u}
\end{align}

For any $v\in\lW$ it holds
\begin{align*}
\iscp{v} {v} &= 2\scp{L_1v}{v}-\scp{(L_1+\widetilde{L}_1)v}{v}+\Bigl(\scp{\widetilde{L}_1 v}{v}
	- \scp{v}{\widetilde{L}_1 v}\Bigr) \nonumber \\
&=2\scp{L_1v}{v}-\scp{(L_1+\widetilde{L}_1)v}{v} + 2 i \im\scp{\widetilde{L}_1 v}{v} \,.
\end{align*}
Thus, for the real parts we have
\begin{equation}\label{iscp-v}
\re \scp{L_1v}{v} = \frac{1}{2}\iscp{v}{v} + \frac{1}{2}\scp{(L_1+\widetilde{L}_1)v}{v} \,,
\end{equation}
where we have used that both terms on the right-hand side are real (the indefinite inner product is a Hermitian form
and $\overline{L_0+\widetilde{L}_0}$ is self-adjoint). Using this identity we can rewrite (\ref{jed-un}) as
\begin{equation*}
\frac{1}{2}\iscp{u_n}{u_n} = \re\scp{f}{u_n} - \re\scpp{\Bigl(C_n + \frac{1}{2}(L_1+\widetilde{L}_1)\Bigr)u_n}{u_n}
\end{equation*}
(note that $Lu_n=L_1u_n$ since $L\subseteq L_1$).
From Theorem \ref{tm:iskp-cont} it now follows
\begin{equation*}
\frac{1}{2}\iscp{u_f}{u_f} \leq \liminf_{n\to\infty}\left(\frac{1}{2}\iscp{u_n}{u_n}\right) 
	= \re\scp{f}{u_f} - \limsup_{n\to\infty}\,\re\scpp{\Bigl(C_n + \frac{1}{2}(L_1+\widetilde{L}_1)\Bigr)u_n}{u_n}\,.
\end{equation*}
By (\ref{iscp-v}), the left-hand side equals $\re\scp{L_1 u_f}{u_f}-\frac{1}{2}\scp{(L_1+\widetilde{L}_1)u_f}{u_f}$, 
while the first term on the right-hand side is given in (\ref{jed-u}).
Therefore, we obtain
\begin{equation}\label{limsup-ineq}
\limsup_{n\to\infty}\,\re\scpp{\Bigl(C_n + \frac{1}{2}(L_1+\widetilde{L}_1)\Bigr)u_n}{u_n} \leq 
	\re\scpp{Kf + \frac{1}{2}(L_1+\widetilde{L}_1)u_f}{u_f}\,.
\end{equation}

Let us show that $B$ is injective: if $Bf=u_f = 0$ for some $f\in \lH$, then (\ref{limsup-ineq}) reads
\begin{equation*}
\limsup_{n\to\infty}\,\re\scpp{\Bigl(C_n + \frac{1}{2}(L_1+\widetilde{L}_1)\Bigr)u_n}{u_n} \le 0\,,
\end{equation*}
which, together with \eqref{eq:Fab_bdd}, the second inequality in the definition of $\fabs$, implies
the strong convergence $(C_n + \frac{1}{2}(L_1+\widetilde{L}_1))u_n\str 0$ in $\lH$. 
Now from $(L_1+\widetilde{L}_1)u_n\dscon  (L_1+\widetilde{L}_1)u_f=0$ in $\lH$ we obtain $C_n u_n\dscon 0$ in $\lH$, and thus, by (\ref{Cn-kvg-K}), $K f =0$. 
Finally, from the second equality in (\ref{jed}) we are able to conclude that $f=L u_f+Kf=0+0=0$.

The injectivity of $B$ enables us to define a linear operator $C$ on the image of $B$ by
$$
C(B f)=Cu_f:=K f\in\lH.
$$
Using (\ref{Cn-kvg}), \eqref{eq:Fab_bdd} (the second inequality in the definition
of $\fabs$), and (\ref{limsup-ineq}), respectively, we have
\begin{equation*}
\begin{aligned}
\frac{1}{\beta}\Bigl\|\Bigl(C + \frac{1}{2}(L_1+\widetilde{L}_1)\Bigr)u_f\Bigr\|^2
&\le \liminf_{n\to\infty} \frac{1}{\beta}\Bigl\|\Bigl(C_n + \frac{1}{2}(L_1+\widetilde{L}_1)\Bigr)u_n\Bigr\|^2 \\
&\le \liminf_{n\to\infty}\, \re\scpp{\Bigl(C_n + \frac{1}{2}(L_1+\widetilde{L}_1)\Bigr)u_n}{u_n}  \\
&\le\re\scpp{\Bigl(C + \frac{1}{2}(L_1+\widetilde{L}_1)\Bigr)u_f}{u_f}\,,
\end{aligned}
\end{equation*}
which shows that $C$ satisfies on $B(\lH)$ the second inequality in the definition of $\fabs$. 
Thus, it is also bounded on $B(\lH)$ by the Schwarz--Cauchy--Bunjakovski inequality. 
Similarly, we obtain the first inequality, as a consequence of (\ref{limsup-ineq}):
\begin{equation*}
\begin{aligned}
\re\scpp{\Bigl(C + \frac{1}{2}(L_1+\widetilde{L}_1)\Bigr)u_f}{u_f}
	&\ge \liminf_{n\to\infty} \,\re\scpp{\Bigl(C_n + \frac{1}{2}(L_1+\widetilde{L}_1)\Bigr)u_n}{u_n} \\
	&\ge \liminf_{n\to\infty}\left(\alpha\Nor{u_n}^2\right) \ge \alpha\Nor{u_f}^2\,,
\end{aligned}
\end{equation*}
where in the second inequality we have used that $C_n\in\fabs$, i.e.~(\ref{eq:Fab_coercive}).

It is left to prove that $B(\lH)$ is dense in $\lH$, and consequently that $C$
can be uniquely extended by continuity to the whole of $\lH$ and this extended operator will then belong to $\fabs$.
For the proof of $\lH=\overline{B(\lH)}$, let $f\in B(\lH)^\bot$ (the orthogonal complement is considered with respect to the standard inner product $\scp{\cdot}{\cdot}$ of the Hilbert
space $\lH$). 
In particular, the equality $\scp{f}{u_f}=0$
holds, which together with (\ref{jed-u}) implies
$$
\re\Skp{L_1 u_f}{u_f}+\re\Skp{Kf}{u_f}=0\,.
$$
If we express the first term by using (\ref{iscp-v}), we get
$$
\frac{1}{2}\iscp{u_f}{u_f}+\re\scpp{Kf+ \frac{1}{2}(L_1+\widetilde{L}_1)u_f}{u_f}=0\,.
$$
Since $u_f\in\lV$, and thus $\iscp{u_f}{u_f}\geq 0$, we have $\re\Skp{Kf+ \frac{1}{2}(L_1+\widetilde{L}_1)u_f}{u_f}\leq 0$. 
From (\ref{limsup-ineq}) and \eqref{eq:Fab_coercive} (the first inequality in the definition of $\fabs$) 
we have $\limsup_{n\to\infty} \Nor{u_n}^2 \le 0$, which gives us the strong convergence $u_n\str 0$ in $\lH$. 
Therefore, $u_f = Bf=0$, and the injectivity of $B$ gives us $f=0$. We deduce that $B(\lH)^\perp = \{0\}$. 
Hence, $B(\lH)$ is dense in $\lH$, which concludes the proof.

\section{G-convergence of classical Friedrichs operators}\label{sec:classFOs}

In this section we revisit and enhance the results of homogenisation for Friedrichs systems that were presented in \cite{BVcpaa}. Let us recall the setting of classical partial differential operators: let $d,r\in \N$, and assume that $\Omega \subseteq \Rd$ is an open set with Lipschitz boundary $\Gamma$. 
Furthermore, let us suppose that the matrix-valued 
functions $\mA_k \in {\mathrm W}^{1,\infty}(\Omega;\Mat r\R)$, $k\in \{1,2,\dots,d\}$,  satisfy:
$$
\mA_k \,\hbox{is symmetric:}\,\mA_k= \mA_k^\top\,.
\eqno(\hbox{F1})
$$
For $\lD:=\Cbc{\Omega ;\Rr}, \lH:=\Ld{\Omega ;\Rr}$ we define the 
operators
$L_0, \widetilde L_0 : \lD\str \lH$ by the 
formul\ae
\begin{equation}\label{Lnula}
\begin{aligned}
L_0\vu :=& \sum_{k=1}^d \partial_k(\mA_k \vu)\,, \\
\widetilde L_0\vu :=& -\sum_{k=1}^d \partial_k(\mA_k \vu) +
(\sum_{k=1}^d \partial_k\mA_k)\vu \,,
\end{aligned}
\end{equation}
where $\partial_k$ stands for the {\sl classical} partial derivative. Then one can easily see that
$L_0$ and $\widetilde L_0$ satisfy (T1) and (T2). Indeed, integration by parts and (F1) imply (T1), while (T2)
follows from the regularity assumptions on $\mA_k$. More precisely, $L_0+\widetilde L_0$ coincides with the multiplication operator $\vu\mapsto  (\sum_{k=1}^d \partial_k\mA_k)\vu$ which is a bounded linear operator $\lH\rightarrow \lH$, as $\sum_{k=1}^d \partial_k\mA_k \in \Lb{\Omega;\Mat r\R}$.

Therefore, the statements i) to vi) of Theorem \ref{thm:abstractFO-prop} are valid, and the corresponding extensions $L_1$ and $\widetilde L_1$ can be represented by the same formul\ae{}
as in (\ref{Lnula}), with the distributional derivatives instead of the classical ones \cite{ABcpde}.

The graph space $\lW$ is here given by
$$
\lW=\Bigl\{\vu \in \Ld{\Omega;\Rr} : \sum_{k=1}^d \partial_k(\mA_k \vu)  \in \Ld{\Omega;\Rr}\Bigr\} \,,
$$
and the space
$\Cbc{\Rd;\Rr}$ is 
dense in $\lW$. More properties of  graph spaces of the first-order partial differential operators can be found in \cite{ABmc, MJensen}.

If we denote by $\mnu =(\nu_1,\nu_2,\dots,\nu_d)\in \Lb{\Gamma;\Rd}$
the unit outward normal on $\Gamma$, and define a matrix field on $\Gamma$ by
$$
\mA_\mnu:=\sum_{k=1}^d\nu_k\mA_k\,,
$$
then for $\vu,\vv\in \Cbc{\Rd;\Rr}$ the indefinite inner product is given by \cite{ABmc}
\begin{equation}\label{boundary-op}
\iscp \vu \vv =\int_{\Gamma}\mA_\mnu(\mx)\vu\rest{\Gamma}(\mx)\cdot
\vv\rest{\Gamma}(\mx) dS(\mx)\,.
\end{equation}

\begin{remark}
When checking conditions (T1) and (T2) above, 
one can notice that we only used that the sum $\sum_{k=1}^d\partial_k \mA_k$
is bounded, and not that all first order derivatives of each $\mA_k$ are bounded.
 
Thus, one can relax this regularity assumption and ask merely for 
$\mA_k \in {\mathrm L}^{\infty}(\Omega;\Mat r\R)$, $k\in \{1,2,\dots,d\}$,
together with 
$\sum_{k=1}^d\partial_k \mA_k \in {\mathrm L}^{\infty}(\Omega;\Mat r\R)$
(see \cite[Section 5]{EGC}).
However, in order to keep the representation \eqref{boundary-op} some additional 
assumptions are required (either on the domain or on the coefficients $\mA_k$).
\end{remark}

Notice that although the pair $(L_0, \widetilde L_0)$ satisfies (T1) and (T2), it does not need to satisfy (T3). However, we can apply the general framework of Subsection \ref{subsec: Gkonv} and conclude that for an arbitrary $C\in \fab$, i.e.~$C\in\lL(\lH)$ such that 
\begin{align*}
\scpp{\Bigl(C+\frac{1}{2}\sum_{k=1}^d \partial_k\mA_k\Bigr)\varphi}{\varphi} 
	\geq \max \Bigl\{\alpha \|\varphi\|^2, \frac{1}{\beta} \Bigl\|\Bigl(C+\frac{1}{2}\sum_{k=1}^d \partial_k{\bf A}_k\Bigr)\varphi\Bigr\|^2\Bigr\} 
	\,, \quad \varphi\in \lH\,,
\end{align*}
the pair of operators
\begin{equation}
T_0:=L_0+C \qquad \hbox{and} \qquad \widetilde{T}_0:=\widetilde{L}_0+C^* 
\end{equation}
will satisfy all properties (T1)--(T3). Actually, all statements of Subsection \ref{subsec: Gkonv} are valid, including the compactness of G-convergence, 
i.e.~Theorem \ref{thm:Gconv} (note that $\lH=\Ld{\Omega ;\Rr}$ is a separable Hilbert space).

\begin{remark}
The setting that we have presented in this section is actually wider than the classical Friedrichs setting in which for the continuous linear operator $C$ only the multiplication operator $(C\vu)(\cdot) = \mC(\cdot)\vu(\cdot)$, for bounded matrix-valued function $\mC\in \Lb{\Omega; \Mat{r}{\R}}$, was considered. If this is the case, then the operator $\Op$
is called {\sl the Friedrichs operator} or {\sl the symmetric positive operator},
and the corresponding first-order system of partial differential
equations $\Op \vu = \vf$, for a given function 
$\vf \in \Ld{\Omega;\R^r}$, is called
{\sl the Friedrichs system} or {\sl the symmetric positive system}. 

\end{remark}

\begin{remark}
	We have presented here Friedrichs systems for real-valued functions. The same can be done for complex-valued functions \cite{ABCE}.
\end{remark}
\begin{remark}
	As already mentioned, the compactness of G-convergence was shown in \cite{BVcpaa} for a specific choice of $L_0$ for which all matrices $\mA_k$ are constant, and under the assumption (K1). Thus, our framework generalises those results in several directions: it deals with abstract Friedrichs operators, allows non-constant matrices in the classical Friedrichs setting, treats the complex case as well, and (K1) is proved to be superfluous.
\end{remark}

\begin{example}
	
One of the examples that were studied in \cite{BVcpaa} was the first-order equation that exhibits memory effects in homogenisation. There the authors showed that their condition (K1) is not satisfied, and thus G-compactness cannot be obtained through the original result \cite{BVcpaa}. The present setting for Friedrichs systems established here is sufficient. It seems that nonlocal effects via homogenisation are a sign hinting at the necessity for the stronger theory developed here. 

The probably simplest model problem where such nonlocal effects occur has been provided in \cite{Tbis} and is presented next:
\begin{equation}\label{ex:jed}
\left\{
\begin{aligned}
		&\partial_t u(x,t) + c(x)u(x,t) = f(x,t)\,,\qquad (x,t)\in \Omega =\Sigma\times\oi0T \,,\\
		&u(x,0)=0\,,\quad x\in \Sigma\,.\cr
\end{aligned}
	\right.
\end{equation}
Here, $\langle\cdot, \cdot\rangle$ stands for an open interval, and, for simplicity, we take $\Sigma =\oi01$. We additionally assume $T>0$, $f\in\Ld\Omega$ and
	$\beta\ge c \ge \alpha >0$ a.e.~on $\Sigma$, for some constants $\alpha$ and $\beta$. 
	
If we denote $\lH = \Ld{\Omega}, \lD=\Cbc{\Omega}$, $L_0 u:=\partial_t u, \widetilde L_0 u := -\partial_t u$ and if $C\in \fabs$ is the multiplication operator $(Cu)(x,t) = c(x)u(x,t)$, $(x,t)\in \Omega$, then operators $T_0 :=L_0 + C$ and $\widetilde T_0:= \widetilde L_0 + C^\ast$ form a joint pair of Friedrichs operators, while $(L_0, \widetilde L_0)$ satisfy (T1) and (T2).
Note that our matrix-valued functions are actually now being scalars: $\mA_1 =0, \mA_2=1$. 

The graph space is
\begin{align*}
	\lW&=\bigl\{u\in \Ld\Omega : \partial_t u\in \Ld\Omega\bigr\}\\
	&= \bigl\{u\in \Ld{0,T;\Ld\Sigma} : \partial_t u\in \Ld{0,T;\Ld\Sigma}\bigr\}\,,
\end{align*}
and it is continuously embedded (see \cite{LM}) in ${\rm C}({0,T;\Ld\Sigma})$, while the indefinite inner product reads
\begin{equation}\label{ex:iscp}
	\iscp uv =
	\int_0^1 u(x, T)v(x, T) \,d x-\int_0^1 u(x, 0)v(x, 0) \,dx\,,\qquad u,v\in \lW.
\end{equation}
The subspaces $\lV$ and $\widetilde \lV$ that correspond to the initial condition in (\ref{ex:jed}) are given by
\begin{align*}
	\lV&=\{u\in \lW : u(\cdot, 0)=0\}\,,\\
	\widetilde \lV&=\{v\in \lW : v(\cdot, T)=0\}\,,
\end{align*}
	and one can easily verify that they satisfy condition (V).
	
Thus, all prerequisites for studying G-convergence for this initial problem in our framework are fulfilled. Since $L_0 + \widetilde L_0 =0$, we have that $\fabs$ is the set of those operators 
$C\in\lL(\lH)$ satisfying
\begin{align*}
\scp{C\varphi}{\varphi} &\geq \alpha \|\varphi\|^2 \,,\\
\scp{C\varphi}{\varphi} &\geq \frac{1}{\beta} \|C\varphi\|^2 \,,
\end{align*}
for any $\varphi\in\lH$.

In \cite{Tbis}, the author studies a bounded sequence $(c_n)_n$ of scalar functions as multiplication operators $(C_n)_n$. As an ansatz, a limit operator is guessed that admits the form of a convolution type nonlocality. Note that in the concluding part of \cite{Tbis}, the author argues, why such an equation is to be expected. For the time-horizon $\R$, in \cite{W12_HO,Waurick2014,STW22}, it is argued that such limiting equations are to be expected by means of operator-theoretic arguments. This, however, hinges on the particular realisation of the time-derivative. Here, we may apply our general G-compactness result.

By Theorem \ref{thm:Gconv}  for any sequence $C_n \in \fabs$ the corresponding (sub)sequence of isomorphisms
$$
T_n:=\partial_t+C_n:\lV\to\lH
$$
G-converges to an isomorphism $T:=\partial_t+C:\lV\to\lH$, for some $C\in\fabs$. Clearly, we can take that every $C_n$ is a multiplication operator of the form $(C_nu)(x,t) = c_n(x)u(x,t)$, $(x,t)\in \Omega$, for some $c_n\in \Lb{\Sigma}$ satisfying $\beta\ge c_n \ge \alpha >0$ a.e.~on $\Sigma$, however note that the operator $C$ in the corresponding G-limit needs not be a multiplication operator, see, e.g., \cite{Tbis} or \cite[Example 13.3.3]{STW22}.

There are several other interesting settings of homogenisation that are related to the equation in (\ref{ex:jed}). For all of them we can apply our setting and Theorem \ref{thm:Gconv}. We shall describe them here, by emphasising the main differences with the above initial problem.

The first possibility is to change the initial condition and instead of $u(\cdot,0)=0$ to take the \emph{periodic initial condition}: $u(\cdot,0)=u(\cdot,T)$ a.e.~on $\Sigma$. In order to realise this initial condition we can take
\begin{equation*}
\lV=\widetilde \lV=\{u\in \lW : u(\cdot, 0)=u(\cdot,T)\}\,.
\end{equation*}

The second possibility is to consider an infinite time interval, i.e.~to put $T=+\infty$. The only difference is that the first term in the expression (\ref{ex:iscp})  for indefinite inner product vanishes. To model the same initial condition from \eqref{ex:jed}, we take
\begin{align*}
\lV&=\{u\in \lW : u(\cdot, 0)=0\}=\lW_0\,,\\
\widetilde \lV&=\lW\,.
\end{align*}

The third possibility is to consider only the equation on $\Omega =\Sigma\times\R$ (without any initial or boundary condition). Now we have that $\iscp uv = 0$, for any $u,v\in\lW$, which is a trivial situation when $\lW_0=\lW$, and thus we can take $\lV=\widetilde{\lV} = \lW$.

\end{example}

\section{Acknowledgements}
K.B.~acknowledges funding by the Croatian Science Foundation under the grant IP-2022-10-5181 (HOMeoS).
M.E.~acknowledges funding by the Croatian Science Foundation under the project UIP-2017-05-7249 (MANDphy).
M.W.~cordially thanks the Department of Mathematics of the University of Osijek for the hospitality  extended to him in the January of 2023.

\section*{Data Availability}

 Data sharing not applicable to this article as no datasets were generated or analysed during the current study.

\end{document}